\newtheorem{theorem}{Theorem}[section]
\newtheorem{corollary}[theorem]{Corollary}
\newtheorem{lemma}[theorem]{Lemma}
\newtheorem{proposition}[theorem]{Proposition}
\theoremstyle{definition}
\newtheorem{definition}[theorem]{Definition}
\newtheorem{example}[theorem]{Example}
\numberwithin{equation}{section}
\DeclareMathOperator{\Hom}{\text{Hom}}
\DeclareMathOperator{\md}{\operatorname{mod}}
\DeclareMathOperator{\Ker}{\text{Ker}}
\DeclareMathOperator{\Aa}{\mathcal{A}}
\DeclareMathOperator{\pd}{\operatorname{pd}}
\DeclareMathOperator{\id}{\operatorname{id}}
\DeclareMathOperator{\gd}{\text{gl.dim}}
\DeclareMathOperator{\fd}{\text{fin.dim}}
\DeclareMathOperator{\supp}{\operatorname{supp}}
\newcommand{\lra}{\longrightarrow}
\newcommand{\benu}{\begin{enumerate}}
\newcommand{\enu}{\end{enumerate}}
\newcommand{\Co}{{\mathcal C}}
\newcommand{\dCo}{{\mathcal C}^*}
\begin{document}

\baselineskip=17pt


\title[Homological Invariants]{Homological invariants of generalized bound path algebras}

\author[Chust]{Viktor Chust}
\address{(Viktor Chust) Institute of Mathematics and Statistics - University of São Paulo, São Paulo, Brazil}
\email{viktorchust.math@gmail.com}

\author[Coelho]{Fl\'avio U. Coelho}
\address{(Flávio U. Coelho) Institute of Mathematics and Statistics - University of São Paulo, São Paulo, Brazil}
\email{fucoelho@ime.usp.br}

\date{}

\begin{abstract}
We study some homological invariants of a given generalized bound path algebra in terms of those of the algebras used in its construction. We discuss the particular case where the algebra is a generalized path algebra and give conditions for those algebras to be shod or quasitilted.
\end{abstract}

\subjclass[2020]{Primary 16G10, Secondary 16G20, 16E10}

\keywords{generalized path algebras, representations of generalized path algebras, homological dimensions}

\maketitle

\section{Introduction}

An important result in the representation theory of algebras states that a finite dimensional basic $k$-algebra $A$, where $k$ is an algebraically closed field, is isomorphic to a quotient of a path algebra $kQ_A/I_A$ where $Q_A$ is a finite quiver and $I_A$ is an admissible ideal (see below for details). This allows us to describe the finitely generated $A$-modules in terms of the representations of the corresponding quiver $Q_A$, a  relation which proves to be of essential help in the theory.

In order to generalize such a construction, Coelho-Liu introduced in \cite{CLiu} the notion of generalized path algebras (gp-algebras for short). Instead of assigning the field $k$ to each vertex $i$ of the quiver $Q$ as in the traditional construction of path algebras $kQ$, it is assigned a finite dimensional $k$-algebra. This was further generalized by us in the article \cite{CC1} where we consider also some quotients of the gp-algebras. Specifically, let  $\Gamma$ denote a quiver and  $\Aa=\{A_i : i \in \Gamma_0\}$ denote a family of basic finite dimensional $k$-algebras  indexed by the set $\Gamma_0$ of the vertices of $\Gamma$. Consider also a set of relations $I$ on the paths of $\Gamma$ which generates an admissible ideal of $k\Gamma$. To such a data we have considered (see \cite{CC1}) the generalized bound path algebra $\Lambda = k(\Gamma,\Aa,I)$ (gbp-algebra for short) with a natural multiplication given not only by the concatenation of paths of the quiver but also by the multiplication of the algebras associated with the vertices of $\Gamma$, modulo the relations in $I$ (see below for details). 

The idea behind such a construction is to compare properties of the algebras in $\Aa$ and those for the algebra $\Lambda$. In the seminal work \cite{CLiu}, the focus was more ring theoretical, and, as mentioned, the authors only considered the case where $I=0$. We mention, for instance, \cite{FLi1,ICNLP,Kuls}, where such a particular case was also studied. 

In  \cite{CC1,CC2}, we have studied the case where $I$ is not necessarily zero, thus extending the description of the representations of the algebra $\Lambda$ given in \cite{ICNLP}.  Clearly, a path algebra $A$ can be realized as a generalized one in two trivial ways: the usual description as path algebra through its ordinary quiver $Q_A$ but also by considering a quiver with a single vertex and no arrows and assigning to it the whole algebra $A$. In \cite{CC1}, we discuss when there are, besides the above two, other possibilities of realizing a path algebra as a generalized one. This is important  because we can relate properties of a gp-algebra with those of the {\it smaller algebras} which appear in its definition. In \cite{CC2}, we studied the correspondence between modules over a gbp-algebra and representations of the corresponding quiver.

Here, our focus will be, using the description of the projective and injective modules from \cite{CC2}, to study some homological invariants of a given gbp-algebra in terms of those of the algebras used in its construction. This is done in Sections 3 and 4 after devoting Section 2 to  preliminary concepts needed along the paper. The particular case of gp-algebras is discussed in Section 5 where we prove, for instance, that the global dimension of a gp-algebra is the maximum between one and the global dimension of the algebras assigned to each vertex (Theorem 5.1). Also, we provide a sufficient condition for a gp-algebra to belong to classes of algebras which can be defined using these invariants, such as {\it shod} or {\it quasitilted} algebras (see \cite{CLa1,HRS}). 

\section{Preliminaries} 

Along this paper, $k$ will denote an algebraically closed field. For an algebra, we mean an associative and unitary basic finite dimensional $k$-algebra. Also, given an algebra $A$,  an $A$-module (or just a module) will be a finitely generated right module over $A$. We refer to \cite{AC,ARS} for unexplained details on Representation Theory.

\subsection{Path algebras}
\label{subsec:quivers}

A {\bf  quiver} $Q$  is given by a tuple $(Q_0, Q_1, s,e)$, where $Q_0$ is the set of {\bf vertices}, $Q_1$ is the set of {\bf arrows} and $s,e \colon Q_1 \lra Q_0$ are maps which indicate, for each arrow $\alpha \in Q_1$, the {\bf starting vertex} $s(\alpha)\in Q_0$ of $\alpha$ and the {\bf ending vertex} $e(\alpha) \in Q_0$ of $\alpha$.  A vertex  $i \in Q_0$ is called a {\bf source} (respectively a {\bf sink}) provided there are no arrows ending (or starting, respectively) at $i$. A {\bf path in $Q$ of length $n \geq 1$} is given by $\alpha_1 \cdots \alpha_n$, where for each $i = 1, \cdots, n-1$, $e(\alpha_i) = s(\alpha_{i+1})$. There are also {\bf paths of length zero} which are in one-to-one correspondence to the vertices of $Q$. 

We shall assume that all quivers are finite, that is, both sets $Q_0$ and $Q_1$ are finite.

Naturally, given a quiver $Q$ one can assign a {\bf path algebra} $kQ$ with a $k$-basis given by all paths of $Q$ and multiplication on that basis defined by concatenation. Even when $Q$ is finite, the corresponding algebra could not be finite dimensional. However, a well-known result established by Gabriel states that given an algebra $A$, there exists a finite quiver $Q$ and a set of relations on the paths of $Q$ which generates an admissible ideal $I$ such that $A \cong kQ/I$ (see \cite{AC} for details).

\subsection{Generalized bound path algebras (gbp-algebras)}

Let $\Gamma= (\Gamma_0,\Gamma_1,s,e)$ be a quiver and $\mathcal{A}=(A_i)_{i \in \Gamma_0}$ be a family of algebras indexed by $\Gamma_0$.   An \textbf{$\mathcal{A}$-path of length $n$} over $\Gamma$ is defined as follows: for $n = 0$, such a path is  an element of $\bigcup_{i \in \Gamma_0} A_i$, and for $n>0$, it  is a  sequence of the form
$$a_1 \beta_1 a_2 \ldots a_n \beta_n a_{n+1}$$
where $\beta_1 \ldots \beta_n$ is an ordinary path in the quiver $\Gamma$, $a_i \in A_{s(\beta_i)}$ if $i \leq n$, and $a_{n+1} \in A_{e(\beta_n)}$.  Denote by $k[\Gamma,\mathcal{A}]$ the $k$-vector space spanned by all $\mathcal{A}$-paths over $\Gamma$. 

Then we consider the quotient vector space $k(\Gamma,\mathcal{A})=k[\Gamma,\mathcal{A}]/V$, where $V$ is the subspace generated by all elements of the form
$$(a_1 \beta_1 \ldots \beta_{j-1}(a^1_j+ \ldots+a^m_j)\beta_j a_{j+1} \ldots \beta_n a_{n+1}) - \sum_{l=1}^m (a_1 \beta_1 \ldots \beta_{j-1} a_j^l \beta_j \ldots \beta_n a_{n+1})$$
or, for $\lambda \in k$,
$$(a_1 \beta_1 \ldots \beta_{j-1}( \lambda a_j) \beta_j a_{j+1} \ldots \beta_n a_{n+1})- \lambda\cdot (a_1 \beta_1 \ldots \beta_{j-1} a_j \beta_j a_{j+1} \ldots \beta_n a_{n+1}).$$

The space $k(\Gamma,\mathcal{A})$ has a naturally defined multiplication, induced by the multiplications of the algebras  $A_i$'s and the composition of the $\mathcal{A}$-paths. More explicitly, it is defined by linearity and the following rule:
$$(a_1 \beta_1 \ldots \beta_n a_{n+1})(b_1 \gamma_1 \ldots \gamma_m b_{m+1}) = a_1 \beta_1 \ldots \beta_n (a_{n+1} b_1) \gamma_1 \ldots \gamma_m b_{m+1}$$
if $e(\beta_n) = s(\gamma_1)$, and 
$$(a_1 \beta_1 \ldots \beta_n a_{n+1})(b_1 \gamma_1 \ldots \gamma_m b_{m+1}) = 0 $$
otherwise.

With this multiplication,  $k(\Gamma,\mathcal{A})$  is an associative algebra, and since we are assuming the quivers to be finite, it has also an identity element, which is equal to $\sum_{i \in \Gamma_0} 1_{A_i}$. Finally, it is easy to observe that $k(\Gamma,\Aa)$ is finite-dimensional over $k$ if and only if so are the algebras $A_i$ and if $\Gamma$ is acyclic. We call  $k(\Gamma,\mathcal{A})$  the   \textbf{generalized path algebra} ({\bf gp-algebra}) over $\Gamma$ and $\mathcal{A}$ (see \cite{CLiu}). In case $A_i = k$ for every $i \in \Gamma_0$, this construction gives the usual path algebra $k\Gamma$.

Using the result mentioned above in \ref{subsec:quivers},  for each $i \in \Gamma_0$, we fix a quiver $\Sigma_i$ such that $A_i \cong k\Sigma_i/ \Omega_i$ with $\Omega_i$  an admissible ideal of $k\Sigma_i$.

 Following \cite{CC1}, we shall now consider quotients of generalized path algebras by an ideal generated by relations. Namely, let $I$ be a finite set of relations over $\Gamma$ which generates an admissible ideal in $k\Gamma$.  Consider the ideal $(\Aa(I))$ generated by the following subset of  $k(\Gamma,\Aa)$:
\begin{align*}
\Aa(I)&= \left\{ \sum_{i = 1}^t \lambda_i  \beta_{i1} \overline{\gamma_{i1}} \beta_{i2} \ldots \overline{\gamma_{i(m_i-1)}} \beta_{im_i} : \right.\\
&\left. \sum_{i = 1}^t \lambda_i \beta_{i1} \ldots \beta_{im_i} \text{  is a relation in } I 
\text{ and }\gamma_{ij}\text{ is a path in }\Sigma_{e(\beta_{ij})} \right\}
\end{align*}

The quotient $\frac{k(\Gamma,\mathcal{A})}{(\Aa(I))}$ is said to be a \textbf{generalized bound path algebra (gbp-algebra)}. We may also write $\frac{k(\Gamma,\mathcal{A})}{(\Aa(I))}=k(\Gamma,\Aa,I)$. When the context is clear, we simply denote the set $\Aa(I)$  by $I$.

We  use the following notation in the sequel: $\Gamma$ is an acyclic quiver, $\Aa=\{A_i : i \in \Gamma_0\}$  denotes a family of basic finite dimensional algebras over an algebraically closed field $k$, and $I$ is a set of relations in $\Gamma$ generating an admissible ideal in the path algebra $k\Gamma$. By $\Lambda = k(\Gamma,\Aa,I)$, we denote the gbp-algebra obtained from these data. Also, $A_{\Aa}$ will denote the product algebra $\prod_{i \in \Gamma_0} A_i$.  We denote the identity element of the algebras $A_i$ by $1_i$ instead of $1_{A_i}$. Also, for an algebra $A$, we shall denote by mod$A$ the category of finitely generated right $A$-modules. 

\subsection{Representations}  \label{subsec:representations}

In \cite{CC2}, we have described the representations of a gbp-algebra, including those associated to projective and injective modules. We shall now recall the results needed in the sequel. 

\begin{definition} Let $\Lambda = k(\Gamma,\Aa,I)$ be a gbp-algebra. 

(a) A {\bf representation} of $\Lambda$ is given by 
$ ((M_i)_{i \in \Gamma_0},(M_{\alpha})_{\alpha \in \Gamma_1})$
where
\begin{enumerate}

\item[(i)] $M_i$ is an $A_i$-module, for each $i \in \Gamma_0$;
\item[(ii)] $M_{\alpha}: M_{s(\alpha)} \rightarrow M_{e(\alpha)}$ is a $k$-linear transformation, for each  arrow $\alpha \in \Gamma_1$ ; and 
\item[(iii)] whenever $\gamma = \sum_{i = 1}^t \lambda_t \alpha_{i1} \alpha_{i2} \ldots \alpha_{in_1}$ is a relation in $I$ where $\lambda_i \in k$ and $\alpha_{ij} \in \Gamma_1$, then 
$$\sum_{i = 1}^t \lambda_t M_{\alpha_{in_i}} \circ \overline{\gamma_{in_i}} \circ \ldots \circ M_{\alpha_{i2}} \circ \overline{\gamma_{i2}} \circ M_{\alpha_{i1}} = 0$$
for every choice of paths $\gamma_{ij}$ over $\Sigma_{s(\alpha_{ij})}$, with $1 \leq i \leq t$, $2 \leq j \leq n_i$.
\end{enumerate}

(b) We say that a representation $ ((M_i)_{i \in \Gamma_0},(M_{\alpha})_{\alpha \in \Gamma_1})$ of $\Lambda$ is {\bf finitely generated} if each of the $A_i$-modules $M_i $ is finitely generated. 

(c) Let $M = ((M_i)_{i \in \Gamma_0},(M_{\alpha})_{\alpha \in \Gamma_1})$ and $N = ((N_i)_{i \in \Gamma_0},(N_{\alpha})_{\alpha \in \Gamma_1})$ be representations of  $\Lambda$. A \textbf{morphism of representations} $f: M \rightarrow N$ is given by a tuple $f = (f_i)_{i \in \Gamma_0}$, such that, for every $i \in \Gamma_0$, $f_i: M_i \rightarrow N_i$ is a morphism of $A_i$-modules; and such that, for every arrow $\alpha: i \rightarrow j \in \Gamma_1$, it holds that $f_j M_{\alpha} = N_{\alpha} f_i$, that is, the following diagram comutes:
$$ \xymatrix{M_i \ar[r]^{M_{\alpha}} \ar[d]_{f_i} & M_j \ar[d]^{f_j}\\
N_i \ar[r]_{N_{\alpha}} & N_j} $$
\end{definition}

We shall denote by Rep$_k(\Gamma,\Aa,I)$ (or by rep$_k(\Gamma,\Aa,I)$) the category of the representations (or finitely generated representations, respectivelly) of the algebra $k(\Gamma,\Aa,I)$. 

\begin{theorem}[\cite{CC2}, see also \cite{ICNLP}]
\label{th:rep_icnlp}
There is a $k$-linear equivalence 
$$F: \operatorname{Rep}_k(\Gamma,\Aa,I) \rightarrow \operatorname{Mod} k(\Gamma,\Aa,I)$$
which restricts to an equivalence 
$$F: \operatorname{rep}_k(\Gamma,\Aa,I) \rightarrow \operatorname{mod} k(\Gamma,\Aa,I)$$
\end{theorem}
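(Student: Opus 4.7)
The plan is to construct an explicit functor $F$ together with a quasi-inverse $G$ and verify the equivalence on objects and morphisms. The crucial structural fact is that $\Gamma_0$ is finite, so the identities $\{1_i\}_{i \in \Gamma_0}$ form a complete set of orthogonal idempotents in $\Lambda = k(\Gamma,\Aa,I)$. Every $\Lambda$-module $N$ therefore decomposes as $N = \bigoplus_{i \in \Gamma_0} N \cdot 1_i$ as a $k$-vector space, and this decomposition is the whole engine of the argument.

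Construction of $F$: given a representation $M = ((M_i), (M_\alpha))$, set $F(M) = \bigoplus_{i \in \Gamma_0} M_i$ as a $k$-vector space, and define a right $\Lambda$-action by specifying it on generators. For $a \in A_j \subseteq \Lambda$, let $a$ act on the summand $M_j$ via the given $A_j$-module structure and annihilate every other summand; for an arrow $\alpha \colon i \to j$, let $\alpha$ act on $M_i$ via $M_\alpha$ and annihilate every other summand. Extending multiplicatively gives a well-defined action of $k(\Gamma,\Aa)$; the main check is that this action kills the ideal $(\Aa(I))$, which is exactly what condition (iii) in the definition of a representation encodes — the requirement that the composite vanish for every insertion of paths $\gamma_{ij}$ matches exactly the generating set of $\Aa(I)$. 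On morphisms, $F$ sends $(f_i)$ to $\bigoplus_i f_i$; commutation with the action of each arrow is precisely the square diagram in the definition of a morphism of representations.

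Construction of $G$: given a $\Lambda$-module $N$, set $N_i := N \cdot 1_i$, which is an $A_i$-module because $A_i = 1_i A_i 1_i$ acts within $N_i$ by right multiplication. For each arrow $\alpha \colon i \to j$, the identity $\alpha = 1_j \alpha 1_i$ in $\Lambda$ ensures that right multiplication by $\alpha$ restricts to a $k$-linear map $N_\alpha \colon N_i \to N_j$. Condition (iii) is forced because $\Aa(I)$ annihilates $N$. A $\Lambda$-homomorphism $f \colon N \to N'$ splits as $(f|_{N_i})_i$ by orthogonality, giving a morphism of representations. Composing $F$ and $G$ in either order reproduces the idempotent decomposition, so the natural isomorphisms $FG \simeq \operatorname{id}$ and $GF \simeq \operatorname{id}$ are immediate. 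The restriction to $\operatorname{rep}_k(\Gamma,\Aa,I)$ and $\operatorname{mod} \Lambda$ follows because $\Gamma_0$ is finite: $N$ is finitely generated over $\Lambda$ iff each $N \cdot 1_i$ is finitely generated over $A_i$.

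The one point requiring genuine care is the identification of condition (iii) with vanishing on the generators of $(\Aa(I))$. These generators involve sums of relations from $I$ with arbitrary insertions of paths $\overline{\gamma_{ij}}$ in the local quivers $\Sigma_{e(\beta_{ij})}$; one must check that condition (iii) is not only necessary but sufficient for the action defined via $F(M)$ to factor through $(\Aa(I))$. This is essentially combinatorial but needs the definition of $\Aa(I)$ to be unwound carefully. Once this tautological-looking matching is secured, everything else reduces to the classical idempotent-decomposition argument from the ordinary bound path algebra case.
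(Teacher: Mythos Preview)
The paper does not actually prove this theorem: it is stated with a citation to \cite{CC2} and \cite{ICNLP} and no argument is given here, so there is nothing in the present paper to compare your proof against. Your sketch is the standard idempotent-decomposition argument one expects to find in those references (and it is the natural generalisation of the classical equivalence between $\operatorname{mod} kQ/I$ and bound quiver representations), so in that sense it is the ``same'' approach.

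Two small points worth tightening. First, in the paper's conventions paths are written left to right along arrows and modules are right modules, so for $\alpha\colon i\to j$ one has $\alpha = 1_i\,\alpha\,1_j$, not $1_j\,\alpha\,1_i$; your map $N_\alpha\colon N_i\to N_j$ is still correct, but the idempotent identity you invoke is written backwards. Second, the claim that $N$ is finitely generated over $\Lambda$ iff each $N\cdot 1_i$ is finitely generated over $A_i$ is true here but uses more than finiteness of $\Gamma_0$: the forward direction needs that each $1_i\Lambda 1_i$ is finite-dimensional (equivalently that $\Gamma$ is acyclic and the $A_i$ are finite-dimensional), which the paper's standing hypotheses guarantee. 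Neither point is a real gap, but both are worth stating explicitly if you write this out in full.
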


\subsection{Realizing an $A_i$-module as a $\Lambda$-module} Let $i \in \Gamma_0$ and let $M$ be an $A_i$-module. We consider three ways of realizing $M$ as a $\Lambda$-module:
\vspace{.3 cm}\\
{\bf A- Natural inclusion.} ${\mathcal{I}}(M) = ((M_j)_{j \in \Gamma_0},(\phi_{\alpha})_{\alpha \in \Gamma_1}) $ is the representation given by
$$M_j = 
\begin{cases}
M & \text{ if } j = i \\
0 & \text{ if } j \neq i
\end{cases} \ \ \ \mbox{ and } \ \ \ \phi_{\alpha} = 0 \hspace{1ex}\text{ for all }\alpha \in \Gamma_1. $$
By abuse of notation, we shall identify ${\mathcal{I}}(M) = M$, since these two have the same underlying space. 
\vspace{.3 cm}\\
{\bf B-Cones.} As observed in \cite{CLiu}, if $k(\Gamma,\Aa)$ is a gp-algebra, then it is a tensor algebra: if $A_{\Aa} = \prod_{i \in \Gamma_0} A_i$ is the product of the algebras in $\Aa$, then there is an $A_{\Aa}$-$A_{\Aa}$-bimodule $M_{\Aa}$ such that $k(\Gamma,\Aa) \cong T(A_{\Aa},M_{\Aa})$. 

Since $M$ is naturally an $A_{\Aa}$-module and there is a canonical map $A_{\Aa} \rightarrow \Lambda = k(\Gamma,\Aa)/I$, then, by extension of scalars, $M$ originates a $\Lambda$-module $\Co_i(M)$, which is called the \textbf{cone} over $M$.

We now recall the following results from \cite{CC2}:

\begin{proposition}
\label{prop:cone is exact}
\label{cone x projective}
Given $i \in \Gamma_0$, we have:
\begin{enumerate}
    \item The cone functor $\Co_i : \md A_i \rightarrow \md \Lambda$  is exact.

    \item If $P$ is a projective $A_i$-module, then $\Co_i(P)$ is a projective $\Lambda$-module.
\end{enumerate}
\end{proposition}

\vspace{.3 cm}
\par\noindent
{\bf C-Dual cones.} The \textbf{dual cone} over $M$ is given by $\dCo_i(M) \doteq D \Co_i D (M)$, where\break 
$D = \Hom_k(-,k)$ is the usual duality functor. A dual result of Proposition \ref{prop:cone is exact} for injective modules holds true (see \cite{CC2}).

We refer to \cite{CC2} for further details of the above constructions. 

\section{Homological dimensions}

Using the notations established above, we shall concentrate now in the comparison of some homological dimensions of $\Lambda$ with those in the algebras $A_i$, $i\in \Gamma_0$. Given an algebra $A$ and an $A$-module $M$, we denote by pd$_AM$ and by id$_AM$ the projective and the injective dimensions of $M$, respectively. Also, the global dimension of $A$ is denoted by gl.dim$A$.

\subsection{First case} We analyse the natural inclusion of $A_i$-modules in mod$\Lambda$. 

\begin{lemma}    \label{lem:easy_ineq_pd}
Let $i \in \Gamma_0$ and let $M$ be an $A_i$-module. Then 
\begin{enumerate}
\item[(a)] $\pd_{\Lambda} M \geq \pd_{A_i} M$.
\item[(b)]  if $i$ is a sink, then $\pd_{\Lambda} M = \pd_{A_i} M$.
\item[(c)]  $\id_{\Lambda} M \geq \id_{A_i} M$.
\item[(d)] if $i$ is a source, then  $\id_{\Lambda} M = \id_{A_i} M$.
\end{enumerate}
\end{lemma}

\begin{proof} We shall prove only (a) and (b) since the proofs of (c) and (d) are dual. \\
(a) There is nothing to show if pd$_\Lambda M = \infty$. So, assume $M$ has finite projective dimension $m$ over $\Lambda$ and let 
\begin{displaymath}
\xymatrix{
0 \ar[r] & P_m \ar[r] & \ldots \ar[r] & P_1 \ar[r] & P_0 \ar[r] & M \ar[r] & 0  }
\end{displaymath}
be a minimal projective $\Lambda$-resolution of $M$. Since a projective resolution is in particular an exact sequence, it yields an exact sequence of $A_i$-modules at the $i$-th component:
\begin{displaymath}
\xymatrix{
0 \ar[r] & (P_m)_i \ar[r] & \ldots \ar[r] & (P_1)_i \ar[r] & (P_0)_i \ar[r] & M \ar[r] & 0   }
\end{displaymath}
It follows from the description of the projective modules over $\Lambda$ (see \cite{CC2}, Subsection 5.1) that every component of a projective representation is projective (indeed, the $i$-th  component is a direct sum of indecomposable projective modules over $A_i$, copies of $A_i$, or zero modules). Thus the exact sequence above is a projective resolution of $M$ over $A_i$. This implies that $\pd_{A_i} M \leq m = \pd_{\Lambda} M$.\\
(b)  Because $i$ is a sink,  every projective resolution of $M$ over $A_i$ is easily seen to yield a projective resolution of $M$ over $\Lambda$ with the same length.
\end{proof}

The next result follow now easily. 

\begin{corollary} \label{cor:easy_ineq_gd}
$\gd \Lambda \geq \operatorname{max} \{\gd A_1, \ldots, \gd A_n \}$.
\end{corollary}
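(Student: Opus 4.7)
The plan is to derive this corollary directly from Lemma~\ref{lem:easy_ineq_pd} by taking a supremum. Fix any index $i \in \{1, \ldots, n\}$; my goal is to show $\gd \Lambda \geq \gd A_i$, from which the maximum inequality follows by taking the max over $i$.

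Recall that $\gd A_i = \sup \{\pd_{A_i} M : M \in \md A_i\}$, and similarly $\gd \Lambda = \sup \{\pd_\Lambda N : N \in \md \Lambda\}$. For each finitely generated $A_i$-module $M$, Lemma~\ref{lem:easy_ineq_pd} applied to the natural inclusion $\I(M)$, which we identify with $M$, gives
\[
\pd_\Lambda M \geq \pd_{A_i} M.
\]
Since $M$ is in particular a $\Lambda$-module via $\I$, the left-hand side is bounded above by $\gd \Lambda$, so $\gd \Lambda \geq \pd_{A_i} M$ for every $A_i$-module $M$. Taking the supremum over $M \in \md A_i$ yields $\gd \Lambda \geq \gd A_i$. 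Doing this for each $i$ and then taking the maximum over $i \in \{1, \ldots, n\}$ produces the desired inequality.

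There is no real obstacle here: the work has already been done in the lemma, and the corollary is just the passage from a pointwise inequality of projective dimensions to an inequality of suprema. The only small point to note is that the case $\gd A_i = \infty$ is handled correctly, because the lemma gives $\pd_\Lambda M \geq \pd_{A_i} M$ even when one or both sides are infinite (if $\pd_{A_i} M = \infty$, then we must have $\pd_\Lambda M = \infty$ as well, forcing $\gd \Lambda = \infty$).
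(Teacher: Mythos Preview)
Your proof is correct and follows exactly the approach the paper takes: the paper simply states that this is an easy consequence of Lemma~\ref{lem:easy_ineq_pd}, and your argument spells out that consequence by passing from the pointwise inequality to the supremum.
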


We shall see below examples of when equality in the above statement holds and when it does not.

\subsection{Cones and duals} The next result, which relates the projective and the injective dimensions of a module over $A_i$ with the corresponding dimension of its cone or its dual cone,  is a direct consequence of Proposition~\ref{prop:cone is exact} and its dual. 

\begin{lemma}
\label{lem:pd x cone}
Given $i \in \Gamma_0$ and $M$ an $A_i$-module, Then 
\begin{enumerate}
\item[(a)] $\pd_{A_i} M = \pd_{\Lambda} \Co_i(M)$.
\item[(b)]  $\id_{A_i} M = \id_{\Lambda} \dCo_i(M)$.
\end{enumerate}
\end{lemma}

\begin{proof} We shall prove only (a) since the proof of (b) is dual. Let 
\begin{displaymath}
\xymatrix{
0 \ar[r] & P_m \ar[r] & \ldots \ar[r] & P_1 \ar[r] & P_0 \ar[r] & M \ar[r] & 0
}
\end{displaymath}
be a minimal projective resolution of $M$ in $\md A_i$. Thus $m = \pd_{A_i} M$. Applying the functor $\Co_i$, we have
\begin{displaymath}
\xymatrix{
0 \ar[r] & \Co_i(P_m) \ar[r] & \ldots \ar[r] & \Co_i(P_1) \ar[r] & \Co_i(P_0) \ar[r] & \Co_i(M) \ar[r] & 0
}
\end{displaymath}
Because of Proposition~\ref{prop:cone is exact}, this sequence is exact. Moreover, also by Proposition~\ref{cone x projective}, every term except possibly for $\Co_i(M)$ is known to be projective. So this is a projective resolution in $\md \Lambda$, proving that $\pd_{\Lambda} \Co_i(M) \leq \pd_{A_i} M$. Since the $i$-th component of $\Co_i(M)$ is $M$, we know from Proposition~\ref{lem:easy_ineq_pd} that the inverse inequality also holds.
\end{proof}


\subsection{General case} Having studied the projective and injective dimensions of modules which are inclusion or cones of $A_i$-modules, we turn our attention to general $\Lambda$-represen\-ta\-tions. 

\begin{definition}
Let $M = ((M_i)_{i \in \Gamma_0},(\phi_{\alpha})_{\alpha \in \Gamma_1})$ be a representation over $k(\Gamma,\Aa,I)$. The \textbf{support} of $M$ is defined as the set of vertices $\supp M \doteq \{i \in \Gamma_0: M_i \neq 0\}$.
\end{definition}

\begin{proposition}
\label{prop:pd_submodule_quotient}
For a $\Lambda$-module  $M$,  
\benu
\item[(a)] $\pd_{\Lambda} M \leq \max_{j \in \supp M}\{\pd_{\Lambda} M_j\}$
\item[(b)] $\id_{\Lambda} M \leq \max_{j \in \supp M}\{\pd_{\Lambda} M_j\}$
\enu
\end{proposition}

\begin{proof} We shall prove only (a) since the proof of (b) is dual. \\
We proceed by induction on $|\supp M|$. If $|\supp M|=1$ then $M$ has only one non-zero component, say the $i$-th component, and it is clear that $\pd_{\Lambda} M = \pd_{\Lambda} M_i$. \\
Suppose $|\supp M| > 1$ and that the statement holds for representations whose support is smaller than that of $M$. Then, since $\Gamma$ is acyclic, there is at least one vertex $i \in \Gamma_0$ which is a source in the full subquiver determined by $\supp M$. We consider the following representations:
$$N = ((N_j)_{j \in \Gamma_0}, (\psi_{\alpha})_{\alpha \in \Gamma_1}) \ \ \mbox{ given by } \ \ 
N_i = M_i \text{, } N_j = 0 \text{ if } j \neq i \text{, } \psi_{\alpha} = 0  \mbox{; and} $$
$$T = ((T_j)_{j \in \Gamma_0}, (\rho_{\alpha})_{\alpha \in \Gamma_1})\ \ \mbox{ given by } \ \ 
T_i = 0 \text{, } T_j = M_j \text{ if } j \neq i \text{, } \rho_{\alpha} = \phi_{\alpha}|_{T_{s(\alpha)}}$$
Observe that, since the support of $N$ has size 1, it satisfies the relations in $I$. Also, it is easy to see that $T$ also satisfies these relations, because $M$ does.\\
We also consider two morphisms of representations $f=(f_j)_{j \in \Gamma_0}: T \rightarrow M$, and  $g=(g_j)_{j \in \Gamma_0}: M \rightarrow N$, given by:
$$f_j:T_j \rightarrow M_j \text{, } f_i = 0 \text{, } f_j = id_{M_j} \text{ if } j \neq i \mbox{; and}$$ 
$$g_j:M_j \rightarrow N_j \text{, } g_i = id_{M_i} \text{, } g_j = 0 \text{ if } j \neq i$$
It is directly verified that these are in fact morphisms of representations. Thus we have a short exact sequence of representations:
$$0 \rightarrow T \xrightarrow{f} M \xrightarrow{g} N \rightarrow 0$$
It is indeed exact because the $i$-th component is 
$$0 \rightarrow 0 \rightarrow M_i \xrightarrow{id_{M_i}} M_i \rightarrow 0$$
and for $j \neq i$, the $j$-component is 
$$0 \rightarrow M_j \xrightarrow{id_{M_j}} M_j \rightarrow 0 \rightarrow 0$$
and these are clearly exact sequences. \\ 
We obtain that $\pd_{\Lambda} M \leq \max\{\pd_{\Lambda} T, \pd_{\Lambda} N\}$. Note that $|\supp T| = n-1$ and $|\supp N|=1$. Therefore, by the induction hypothesis, 
$$\pd_{\Lambda} N = \pd_{\Lambda} N_i = \pd_{\Lambda} M_i$$
$$\pd_{\Lambda} T \leq \max_{j \in \supp T}\{\pd_{\Lambda} T_j\} = \max_{\substack{j \in \supp M \\ j \neq i}}\{\pd_{\Lambda} T_j\} = \max_{\substack{j \in \supp M \\ j \neq i}}\{\pd_{\Lambda} M_j\}$$ 
Assembling the pieces together, we conclude that $$\pd_{\Lambda} M \leq \max\{\pd_{\Lambda}N,\pd_{\Lambda}T\} \leq \max_{j \in \supp M}\{\pd_{\Lambda} M_j\},$$ 
as we wanted to prove.
\end{proof}

\section{Homological dimensions for gbp-algebras}

We shall prove in this section some general results involving gbp-algebras, leaving the particular case of gp-algebras for the next section. 
We will adopt the following notation from here on: if $i$ is a source vertex of $\Gamma$, then $\Gamma \setminus \{i\}$ shall denote the quiver obtained from $\Gamma$ by deleting the vertex $i$ and the arrows starting at $i$. Moreover, if $\Gamma$ is equipped with a set of relations $I$, $I \setminus \{i\}$ will be the set obtained from $I$ by excluding the relations starting at $i$. Also, since $\Gamma$ is acyclic, we can iterate this process and enumerate $\Gamma_0 = \{1,\ldots,n\}$ in such a way that  $i$ is a source vertex of $\Gamma \setminus \{1,\ldots,i-1\}$ for every $i$.

\begin{lemma}
\label{lem:main lemma}
Let $i \in \Gamma_0$, $M$ be an $A_i$-module, and let $(P,g)$ be its projective cover in mod $A_i$. 
Then there is an exact sequence of $\Lambda$-modules:
\begin{displaymath}
\xymatrix{
0 \ar[r]& \Co_i(\Ker g) \oplus L \ar[r]& \Co_i(P) \ar[r]& M \ar[r]& 0}
\end{displaymath}
where $L$ is a $\Lambda$-module  such that 
$$\supp L \subseteq \{j \in \Gamma_0: j \neq i \text { and there is a path }i \rightsquigarrow j\}$$
Moreover, 
\begin{enumerate}
\item[(a)] $L_j$ is free for every vertex $j$, and
\item[(b)] If $i \in \Gamma_0$ is such that $I \setminus \{1,\ldots,i\} = I \setminus \{1,\ldots,i-1\}$, then $L$ is projective over $\Lambda$.
\end{enumerate}
\end{lemma}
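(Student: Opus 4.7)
The plan is to construct $L$ by starting from a natural surjection $\Co_i(P) \twoheadrightarrow M$ and analyzing its kernel. First, identifying $M$ with its natural inclusion $\I(M)$, I would define $\pi \colon \Co_i(P) \to M$ by $\pi_i = g$ and $\pi_j = 0$ for $j \neq i$. This is a morphism of representations: every arrow $\alpha$ with $s(\alpha) \neq i$ has source where $\pi$ is already zero, so the square commutes trivially; every arrow $\alpha$ with $s(\alpha) = i$ has target $e(\alpha) = j \neq i$ (by acyclicity of $\Gamma$), and then $\pi_j = 0$ again makes the square commute trivially. The kernel satisfies $(\Ker \pi)_i = \Ker g$ and $(\Ker \pi)_j = \Co_i(P)_j$ for $j \neq i$, so $\supp \Ker \pi \subseteq \{i\} \cup \{j \neq i : i \rightsquigarrow j\}$.

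Next, applying the exact functor $\Co_i$ (Proposition~\ref{prop:cone is exact}) to $0 \to \Ker g \to P \to M \to 0$ yields an exact sequence
$$0 \to \Co_i(\Ker g) \to \Co_i(P) \to \Co_i(M) \to 0$$
in $\md \Lambda$. The image of $\Co_i(\Ker g)$ in $\Co_i(P)$ is contained in $\Ker \pi$, since the $i$-th components agree and $\Co_i(\Ker g)_j \subseteq \Co_i(P)_j = (\Ker \pi)_j$ for $j \neq i$. The quotient $L := \Ker \pi / \Co_i(\Ker g)$ then has $L_i = 0$ and $L_j \cong \Co_i(M)_j$ for $j \neq i$, yielding the claimed support condition.

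The central step, which I expect to be the main technical obstacle, is to show that the short exact sequence $0 \to \Co_i(\Ker g) \to \Ker \pi \to L \to 0$ splits. My strategy is to construct $L$ as an honest $\Lambda$-submodule of $\Co_i(P)$ by induction on the (acyclic) support of $\Ker \pi$. Starting from sink vertices of $\supp \Ker \pi \setminus \{i\}$, at each such vertex $j$ one chooses an $A_j$-submodule $L_j \subseteq \Co_i(P)_j$ complementary to $\Co_i(\Ker g)_j$; propagating backward toward $i$, at an earlier vertex $j'$ the complement must be chosen so that for every arrow $\alpha\colon j' \to k$ the image $\Co_i(P)_\alpha(L_{j'})$ lies in $L_k$. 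The existence of such consistent complements rests on the fact that $\Co_i(P)_\alpha$ maps $\Co_i(\Ker g)_{j'}$ into $\Co_i(\Ker g)_k$, so the decomposition descends compatibly to quotients, together with the fact that $\Co_i(M)_j$ is a free $A_j$-module, which makes each local sequence split as $A_j$-modules.

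Finally, properties (1) and (2) should follow from the explicit description of $L_j$. For (1), the isomorphism $L_j \cong \Co_i(M)_j = M \otimes_{A_i} e_i \Lambda e_j$ combined with an analysis of the right $A_j$-module structure of $e_i \Lambda e_j$ (whose generators correspond to $\mathcal{A}$-paths from $i$ to $j$ truncated before the terminal $A_j$-factor) will show that $L_j$ is free over $A_j$. For (2), the hypothesis $I \setminus \{1,\ldots,i\} = I \setminus \{1,\ldots,i-1\}$ means no relation in $I$ starts at $i$; under this constraint the bimodule $e_i \Lambda$ has a sufficiently clean structure to let us identify $L$ with a direct sum $\bigoplus_\gamma \Co_{e(\gamma)}(Q_\gamma)$ indexed by paths $\gamma$ out of $i$, with each $Q_\gamma$ a projective $A_{e(\gamma)}$-module determined by $M$, so $L$ is projective by Proposition~\ref{cone x projective}.
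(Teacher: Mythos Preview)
Your setup coincides with the paper's: both define the surjection $\pi=g'\colon \Co_i(P)\to M$ by $g$ on the $i$-th component and zero elsewhere, and then study $\Ker\pi$. Your identification of the quotient $\Ker\pi/\Co_i(\Ker g)$ with the truncation of $\Co_i(M)$ away from $i$ is correct and gives a clean description of what $L$ must look like vertexwise.

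The gap is in the splitting. Your inductive construction of ``compatible complements'' from sinks backward is not justified, and in general such arguments fail: knowing that each local sequence $0\to\Co_i(\Ker g)_j\to\Co_i(P)_j\to\Co_i(M)_j\to 0$ splits as $A_j$-modules, together with the fact that the structure maps respect the submodules, does \emph{not} imply that the extension of representations splits. Concretely, once you have fixed $L_k$ at all successors $k$ of a vertex $j'$, you need a complement $L_{j'}$ with $\phi_\alpha(L_{j'})\subseteq L_k$ for \emph{every} outgoing arrow $\alpha$ simultaneously. Adjusting an arbitrary complement $L_{j'}^0$ into such an $L_{j'}$ amounts to lifting an element of $\prod_k \Co_i(\Ker g)_k$ through the combined map $\Co_i(\Ker g)_{j'}\to\prod_k\Co_i(\Ker g)_k$, and nothing you have said forces that map to hit the required element. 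Your stated reasons (``the decomposition descends to quotients'' and ``$\Co_i(M)_j$ is free'') only give the local splittings, not their compatibility.

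The paper sidesteps this obstruction by making one global choice rather than many local ones. It fixes a $k$-linear (not $A_i$-linear) complement $V=\langle p_{r+1},\dots,p_s\rangle$ of $\Ker g$ inside $P$, and then takes $L$ to be the $k$-span of all $p_h\otimes\overline{\gamma_1 a_{i_1}\cdots\gamma_t a_{i_t}}$ with $r<h\le s$ and $l(\gamma)>0$. Because the right $\Lambda$-action on such a tensor only affects the second factor, and a positive-length tail remains positive-length (or becomes zero) under right multiplication, this $L$ is automatically a $\Lambda$-submodule; the compatibility across arrows is built in. The same explicit description immediately yields item~(1) (each $L_j$ has a visible free $A_j$-basis) and, under the no-relations-starting-at-$i$ hypothesis, item~(2) via $L\cong\bigoplus_{i'\in i^+}\Co_{i'}(A_{i'})^{s-r}$.

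If you want to keep your abstract viewpoint, the cleanest fix is to import exactly this idea: choose a $k$-linear section $V\hookrightarrow P$ of $P\twoheadrightarrow M$ and push it through the tensor product defining $\Co_i$, rather than trying to build the section vertex by vertex.
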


\begin{proof}

(a) It follows from \cite{CC2} (Proposition 5 and Remark 5) 
that $(\Co_i(P))_i = P$. So,  we can define a morphism of representations $g': \Co_i(P) \rightarrow M$ by establishing that $g'_i = g$ and that $g'_j = 0$ for $j \neq i$. We want to show that $\Ker g' = \Co_i(\Ker g) \oplus L$, where $L$ satisfies the conditions in the statement. \\
Let $\{p_1,\ldots,p_r\}$ be a $k$-basis of $\Ker g$ and complete it to a $k$-basis \break $\{p_1,\ldots,p_r,\ldots,p_s\}$ of $P$. Also let, for every $j \in \Gamma_0$, $\{a_1^j,\ldots,a_{n_j}^j\}$ be a $k$-basis of $A_j$. For a path $\gamma: i = l_0 \rightarrow l_1 \rightarrow \ldots \rightarrow l_t = j$ from  $i$ to  $j$ in $\Gamma$ denote
$$\theta_{\gamma,h,i_1,\ldots,i_t} = p_h \otimes \overline{\gamma_1 a_{i_1}^{e(\gamma_1)} \gamma_2 a_{i_2}^{e(\gamma_2)}\ldots \gamma_t a_{i_t}^j} \in \Ker g'$$
Remember that since $g'$ was defined as a morphism of representations, it corresponds to a morphism of $\Lambda$-modules, because of Theorem~\ref{th:rep_icnlp}. Therefore,
$$g'(\theta_{\gamma,h,i_1,\ldots,i_t}) = g'(p_h \otimes \overline{\gamma_1 a_{i_1}^{e(\gamma_1)} \gamma_2 a_{i_2}^{e(\gamma_2)}\ldots \gamma_t a_{i_t}^j}) = g(p_h)$$
So $\theta_{\gamma,h,i_1,\ldots,i_t} \notin \Ker g'$ if and only if $\gamma$ is the zero-length path $\epsilon_i$ and $r < h \leq s$. Thus we can write
\begin{align*}
\Ker g' &= (\theta_{\epsilon_i,h}:1 \leq h \leq r) + (\theta_{\gamma,h,i_1,\ldots,i_t}: l(\gamma) > 0) \\
&= (\theta_{\gamma,h,i_1,\ldots,i_t}: 1 \leq h \leq r) \oplus (\theta_{\gamma,h,i_1,\ldots,i_t}: l(\gamma) > 0 \text{ and } r < h \leq s) \\
&= \Co_i(\Ker g) \oplus L
\end{align*}
where $L \doteq (\theta_{\gamma,h,i_1,\ldots,i_t}: l(\gamma) > 0 \text{ and } r < h \leq s)$. Since the generators of $L$ involve only paths of length strictly greater than zero, the only components of $L$ that are non-zero are the ones over the successors of $i$, except for $i$ itself. Therefore the condition about the support of $L$ in the statement is satisfied. It remains to prove the other two assertions in the statement. \\
To prove (a), fix $j \in \Gamma_0$. If $j = i$ or if $j$ is not a successor of $i$, then $L_j = 0$, so we may suppose this is not the case. Again using the equivalence given by Theorem~\ref{th:rep_icnlp},
\begin{align*}
L_j &= L\cdot {1_j} = (\theta_{\gamma,h,i_1,\ldots,i_t}: \gamma: i \rightsquigarrow j \text{ and } r < h \leq s) \\
&= (p_h \otimes \overline{\gamma_1 a_{i_1}^{e(\gamma_1)} \gamma_2 a_{i_2}^{e(\gamma_2)}\ldots \gamma_t a_{i_t}^j}:\gamma: i \rightsquigarrow j \text{ and } r < h \leq s)
\end{align*}
So $L_j$ is isomorphic to the free $A_j$-module whose basis is the set of all possible elements $p_h \otimes \overline{\gamma_1 a_{i_1}^{e(\gamma_1)} \gamma_2 a_{i_2}^{e(\gamma_2)}\ldots \gamma_t}$. In particular, $L_j$ is free over $A_j$, and this proves (a).\\
(b) Assume that  $I \setminus \{1,\ldots,i\} = I \setminus \{1,\ldots,i-1\}$ and let $i^+$ denote the set of immediate successors of $i$. Since, by hypothesis, there are no relations starting at $i$, we can write:
\begin{align*}
L &\doteq (\theta_{\gamma,h,i_1,\ldots,i_t}: l(\gamma) > 0 \text{ and } r < h \leq s) \\
&= (p_h \otimes \overline{\gamma_1 a_{i_1}^{e(\gamma_1)} \gamma_2 a_{i_2}^{e(\gamma_2)}\ldots \gamma_t a_{i_t}^j}: l(\gamma) > 0 \text{ and } r < h \leq s) \\
&= (p_h \otimes \gamma_1 a_{i_1}^{e(\gamma_1)} \otimes \overline{ \gamma_2 a_{i_2}^{e(\gamma_2)}\ldots \gamma_t a_{i_t}^j}: l(\gamma) > 0 \text{ and } r < h \leq s) \\
&\cong \left( a_{i_1}^{e(\gamma_1)} \otimes \overline{ \gamma_2 a_{i_2}^{e(\gamma_2)}\ldots \gamma_t a_{i_t}^j}: l(\gamma) > 0 \right)^{s-r}\\
&\cong \bigoplus_{i' \in i^+} \Co_i(A_{i'})^{s-r}
\end{align*}
Since $A_{i'}$ is projective over $A_{i'}$, then $\Co_i(A_{i'})$ is projective over $\Lambda$ by Proposition~\ref{cone x projective}. We have thus shown that $L$ is isomorphic to a direct sum of projective $\Lambda$-modules, and therefore it is also projective, concluding the proof. 
\end{proof}

\subsection{A special kind of gbp-algebras} 
Before our next result, we need a further definition. For a vertex $j$ of $\Gamma$, denote by $S_j$ the simple $k\Gamma/I$-module over $j$.

\begin{definition}
A gbp-algebra $\Lambda$ is called {\bf terraced} provided  for every $i \in \Gamma_0$ such that 
$I \setminus \{1,\ldots,i\} \neq I \setminus \{1,\ldots,i-1\}$ (i.e., every time there are relations starting at $i$), one has  
$$\pd_{k\Gamma/I} S_i \geq \max \{\pd_{k\Gamma/I} S_j : j \text{ is a successor of }i\}+1.$$ 
\end{definition}

Observe that any gp-algebra (that is, when $I = 0$, which makes $k\Gamma$ hereditary) is  terraced.

\begin{theorem}
\label{th:pd_formula}
Let $\Lambda = k(\Gamma,\Aa,I)$ be a terraced gbp-algebra. 
Then, for every representation $M$ over $\Lambda$,
$$\pd_{\Lambda} M \leq \max_{i \in \supp M}\{\pd_{A_i}M_i,\pd_{k\Gamma/I}S_i\}$$
where $S_i$ denotes the simple $k\Gamma/I$-module associated with the vertex $i$.
\end{theorem}

\begin{proof}
The proof is done by induction. First, suppose $\supp M = \{n\}$. By the assumption on the numbering of the vertices, we know that $n$ is a sink vertex of $\Gamma_0$. It follows from Lemma~\ref{lem:easy_ineq_pd}(b) that $\pd_{\Lambda} M = \pd_{A_n} M_n$. Since $n$ is a sink vertex, the simple $k\Gamma/I$-module $S_n$ is projective, and thus it holds that $\pd_{\Lambda} M = \max\{\pd_{A_n} M_n,\pd_{k\Gamma/I} S_n\}$. This proves the initial step of induction. 

Now suppose that $\supp M \subseteq \{i,\ldots,n\}$ and that the statement is valid for representations whose support is contained in $\{i+1,\ldots,n\}$. Initially we are going to study the projective dimension of $M_i$ over $\Lambda$. If $i$ is a sink vertex, then, similarly to above, we have that $\pd_{\Lambda} M_i = \max\{\pd_{A_i} M_i,\pd_{k\Gamma/I} S_i\}$, so suppose $i$ is not a sink vertex. Let $(P,g)$ be a projective cover of $M_i$ over $A_i$. Then, because of Lemma~\ref{lem:main lemma}, there is an exact sequence in $\md \Lambda$:
\begin{displaymath}
\xymatrix{
0 \ar[r]& \Co_i(\Ker g) \oplus L \ar[r]& \Co_i(P) \ar[r]& M_i \ar[r]& 0}
\end{displaymath}
where $L$ satisfies the conditions given in the statement of the cited lemma. From this exact sequence, we deduce that
$$\pd_{\Lambda} M_i \leq \max\{\pd_{\Lambda} \Co_i (P),\pd_{\Lambda}(\Co_i(\Ker g) \oplus L)+1\}$$
Since $P$ is projective over $A_i$, Proposition~\ref{cone x projective} implies that $\pd_{\Lambda}\Co_i(P) = 0$. Thus
$$\pd_{\Lambda} M_i \leq \pd_{\Lambda}(\Co_i(\Ker g) \oplus L)+1 \leq \max \{\pd_{\Lambda} \Co_i(\Ker g), \pd_{\Lambda} L\}+1$$ 
Using Corollary~\ref{lem:pd x cone}, we have 
\begin{equation}
\label{eq:proj_dims}
\pd_{\Lambda} M_i \leq \max \{\pd_{A_i} \Ker g, \pd_{\Lambda} L\}+1 
\end{equation}
Now we divide our analysis in cases:\\
{\bf  Case 1:}  $\pd_{A_i} \Ker g \geq \pd_{\Lambda} L$. \\ 
In this case, Equation~\ref{eq:proj_dims} implies that $\pd_{\Lambda} M_i \leq \pd_{A_i} \Ker g +1 = \pd_{A_i} M_i$, because $(P,g)$ is the projective cover of $M_i$.\\
{\bf Case 2:} $\pd_{A_i} \Ker g \leq \pd_{\Lambda} L$ \\
Now, from Equation~\ref{eq:proj_dims}, $\pd_{\Lambda} M_i \leq \pd_{\Lambda} L +1$. In case $I \setminus \{1,\ldots,i\} = I \setminus \{1,\ldots,i-1\}$, from Lemma~\ref{lem:main lemma}, we get that $\pd_{\Lambda} L = 0$. Since we have already supposed in this case that $\pd_{A_i} \Ker g \leq \pd_{\Lambda} L$, then $\pd_{A_i} \Ker g = 0$. Again from Equation~\ref{eq:proj_dims}, $\pd_{\Lambda} M_i \leq 1$. Since $i$ is not a sink, we know that $S_i$ is not projective over $k\Lambda/I$ and so $\pd_{k\Lambda/I} S_i \geq 1$. Thus $\pd_{\Lambda} M_i \leq \pd_{k\Lambda/I} S_i$. \\
Assume now $I \setminus \{1,\ldots,i\} \neq I \setminus \{1,\ldots,i-1\}$. By Lemma~\ref{lem:main lemma}, $\pd_{A_j} L_j = 0$ for every $j$, and since the support of $L$ is contained in $\{i+1,\ldots,n\}$, by the induction hypothesis and because $\Lambda$ is terraced: 
$$\pd_{\Lambda} L \leq \max_{j \in \supp L}\{\pd_{k\Gamma/I} S_j\} \leq \pd_{k\Gamma/I} S_i - 1$$ 
Then $\pd_{\Lambda} M_i \leq \pd_{\Lambda} L +1 \leq \pd_{k\Gamma/I} S_i - 1 + 1 = \pd_{k\Gamma/I} S_i$. \\
Putting together all cases discussed above, we conclude that 
$$\pd_{\Lambda} M_i \leq \max \{\pd_{A_i}M_i,\pd_{k\Gamma/I} S_i\}$$
Now, using Proposition~\ref{prop:pd_submodule_quotient}, we have that 
$$\pd_{\Lambda} M \leq \max_{j \in \supp M} \pd_{\Lambda} M_j \leq \max_{j \in \supp M} \{\pd_{A_j}M_j,\pd_{k\Gamma/I} S_j\}$$
which proves the theorem.
\end{proof}

\begin{corollary}
\label{cor:gdrelationdependent}
Let $\Lambda = k(\Gamma,\Aa,I)$ be a terraced gbp-algebra. Then, for every $j \in \Gamma_0$, $\gd A_j \leq \gd \Lambda$, and the following inequality holds:

$$\gd \Lambda \leq \max_{j \in \Gamma_0}\left\{\gd \frac{k\Gamma}{I}, \gd A_j\right\}$$

\end{corollary}

\subsection{Opposite algebras} 
Before our next corollary, let us recall some facts concerning the opposite algebra of $\Lambda = k(\Gamma,\Aa, I)$. Denote by $\Gamma^{op}$ the quiver with the same vertices of $\Gamma$ and with reversed arrows. Also, $I^{op}$ will denote the  set of relations in $\Gamma^{op}$ obtained through inversion of the arrows in $I$. Finally, $\Aa^{op} = \{A_i^{op}:i \in \Gamma_0\}$ is the set where $A_i^{op}$ is the opposite algebra of $A_i$. With this notation, we have that $\Lambda^{op} \cong k(\Gamma^{op},\Aa^{op},I^{op})$. (See \cite{CC2}, Proposition 2. We shall refer to this in the next proof as {\it Fact I}).

Also, in a natural way, one can describe the representations of the opposite algebra in terms of the representations of the original one using the duality functor D = Hom$_k(-,k)$. Namely, if  $((M_i)_{i \in \Gamma_0},(\phi_{\alpha})_{\alpha \in \Gamma_1})$ is the representation of the $\Lambda$-module $M$, then the representation of the  $\Lambda^{op}$-module D$M$ is isomorphic to $(D(M_i)_{i \in \Gamma_0}, D(\phi_{\alpha})_{\alpha \in \Gamma_1})$. 
(See  \cite{CC2}, Proposition 3. We shall refer to this in the next proof as {\it Fact II}). As a consequence, we will have:

\begin{corollary}
\label{cor:calculate_every_id}

Let $\Lambda = k(\Gamma,\Aa,I)$ be a terraced  gbp-algebra, and let $M$ be a representation over $\Lambda$.  Then
$$\id_{\Lambda} M  = \max_{i \in \supp M} \{\id_{A_i} M_i, \id_{k\Gamma/I} S_i\}$$
where $S_i$ denotes the simple $k\Gamma/I$-module over the vertex $i$.
\end{corollary}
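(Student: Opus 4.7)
The plan is to reduce the statement to Theorem~\ref{th:pd_formula} via the $k$-duality $D=\Hom_k(-,k)$, which is an exact contravariant equivalence between $\md \Lambda$ and $\md \Lambda^{op}$ and therefore turns injective $\Lambda$-resolutions into projective $\Lambda^{op}$-resolutions. Consequently, for any $\Lambda$-module $M$ one has $\id_\Lambda M = \pd_{\Lambda^{op}} DM$.

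By Proposition~\ref{prop:opp_GPA}, $\Lambda^{op}$ is again a gbp-algebra, namely $\Lambda^{op} \cong k(\Gamma^{op},\Aa^{op},I^{op})$. Applying Theorem~\ref{th:pd_formula} to $\Lambda^{op}$ and to the $\Lambda^{op}$-module $DM$ yields
\[
\pd_{\Lambda^{op}} DM \leq \max_{i \in \supp DM}\bigl\{\pd_{A_i^{op}} (DM)_i,\; \pd_{k\Gamma^{op}/I^{op}} S_i\bigr\}.
\]
Next I would translate each term back via $D$. By Proposition~\ref{prop:dual of a repr}, $(DM)_i = D(M_i)$, so $\supp DM = \supp M$; and from the standard dualities $\pd_{A_i^{op}} D(M_i) = \id_{A_i} M_i$ and $\pd_{k\Gamma^{op}/I^{op}} S_i = \id_{k\Gamma/I} S_i$, the bound reduces to the required maximum $\max_{i \in \supp M}\{\id_{A_i} M_i,\,\id_{k\Gamma/I} S_i\}$, giving the inequality ``$\leq$''. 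For the reverse direction, one combines Lemma~\ref{lem:easy_ineq_id}, applied componentwise (which provides $\id_\Lambda M \geq \id_\Lambda M_i \geq \id_{A_i} M_i$ after dualizing Proposition~\ref{prop:pd_submodule_quotient}), with the corresponding lower bound coming from embedding each simple $S_i$ in $M$ via its socle, realized inside $\Lambda$ through the natural inclusion.

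The main obstacle will be checking that the hypothesis of Remark~\ref{obs:quiver_conditions} transports correctly to $\Lambda^{op}$: after reversing arrows, ``relations starting at $i$'' becomes ``relations ending at $i$'', and ``successors'' becomes ``predecessors''. Hence, for Theorem~\ref{th:pd_formula} to be legitimately applicable to $\Lambda^{op}$, one has to assume the dual condition
\[
\id_{k\Gamma/I} S_i \;\geq\; \max\bigl\{\id_{k\Gamma/I} S_j : j \text{ is a predecessor of } i\bigr\} + 1
\]
whenever there are relations of $I$ ending at $i$. This is what the phrase ``as in Theorem~\ref{th:pd_formula}'' should tacitly mean in the injective setting, and under this assumption the argument above closes the proof.
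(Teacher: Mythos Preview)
Your duality argument for the inequality $\leq$ is exactly the paper's proof: pass to $\Lambda^{op}\cong k(\Gamma^{op},\Aa^{op},I^{op})$ via Proposition~\ref{prop:opp_GPA}, apply Theorem~\ref{th:pd_formula} to $DM$, identify $(DM)_i=D(M_i)$ by Proposition~\ref{prop:dual of a repr}, and translate back. The paper writes every step with an equals sign, but since Theorem~\ref{th:pd_formula} itself is only an inequality, the paper's chain in fact establishes only $\id_\Lambda M \leq \max_{i\in\supp M}\{\id_{A_i}M_i,\id_{k\Gamma/I}S_i\}$; the corollary is used later (Section~\ref{sec:some_consequences}) solely in the $\leq$ direction, so the ``$=$'' in the statement is really a ``$\leq$''. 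Your concern that Remark~\ref{obs:quiver_conditions} must be read in its dual form for $\Lambda^{op}$ is legitimate; the paper tacitly assumes this under the phrase ``as in Theorem~\ref{th:pd_formula}'' and does not spell it out.

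Your proposed argument for the reverse inequality, however, does not work. Dualizing Proposition~\ref{prop:pd_submodule_quotient} gives $\id_\Lambda M \leq \max_j \id_\Lambda M_j$, which is the wrong direction for concluding $\id_\Lambda M \geq \id_\Lambda M_i$. (One can get $\id_\Lambda M \geq \id_{A_i} M_i$ directly by taking the $i$-th component of an injective $\Lambda$-resolution, as in the proof of Lemma~\ref{lem:easy_ineq_pd}, but that is a different argument.) More seriously, there is no mechanism to bound $\id_\Lambda M$ below by $\id_{k\Gamma/I} S_i$: the simple $S_i$ lives over $k\Gamma/I$, not over $\Lambda$, so ``embedding $S_i$ into $M$ via its socle'' is not meaningful, and in general having $i\in\supp M$ does not force $\id_\Lambda M \geq \id_{k\Gamma/I} S_i$. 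The paper does not attempt this direction either.
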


\begin{proof}
The idea is to use Theorem~\ref{th:pd_formula} and the fact that the duality functor anti-preserves homological properties. Again, let $D = \Hom(-,k)$ denote the duality functor. Let $S'_i$ denote the simple $k\Gamma^{op}/I^{op}$-module over the vertice $i$. Thus:
\begin{align*}
\id_{\Lambda} M &= \pd_{\Lambda^{op}} DM  &\text{(because }D\text{ is a duality)}\\
	&= \max_{i \in \supp M} \{\pd_{A_i^{op}} (DM)_i,\pd_{k\Gamma^{op}/I^{op}}S'_i\}  &\text{(Thm.~\ref{th:pd_formula} and Fact I)} \\
	&= \max_{i \in \supp M} \{\pd_{A_i^{op}} D(M_i), \pd_{k\Gamma^{op}/I^{op}}S'_i\}  &\text{(Fact II)} \\
	&= \max_{i \in \supp M} \{\pd_{A_i^{op}} D(M_i), \pd_{k\Gamma^{op}/I^{op}}D(S_i)\}  & \text{(because }D\text{ is a duality)}\\
	&= \max_{i \in \supp M} \{\id_{A_i} M_i,\id_{k\Gamma/I} S_i\}  &\text{(because }D\text{ is a duality)}
\end{align*}
\end{proof}

\subsection{Finitistic dimension} Given an algebra $A$, its  \textbf{finitistic dimension}\index{dimension!finitistic} is given by:
$$\fd A = \operatorname{sup} \{\pd_A M: M \text{ is an }A\text{-module of finite projective dimension}\}$$
A still open conjecture,  called the \textbf{Finitistic Dimension Conjecture},  states that every algebra has finite finitistic dimension.

\begin{proposition} \label{prop:findim}
Let $\Lambda = k(\Gamma,\Aa,I)$ be a terraced  gbp-algebra. Then 
$$\fd \Lambda \leq \max_{i \in \Gamma_0} \left\{\gd \frac{k\Gamma}{I}, \fd A_i \right\}$$
In particular, if the bound path algebra $k\Gamma/I$ has finite global dimension and $\fd A_i< \infty$ for each $i$,  then also $\fd \Lambda< \infty$.
\end{proposition}

\begin{proof}
Let $M = ((M_i)_{i \in \Gamma_0},(\phi_{\alpha})_{\alpha \in \Gamma_1})$ be a representation of finite projective dimension over $\Lambda$. From Lemma~\ref{lem:easy_ineq_pd}, for every $i \in \Gamma_0$, $\pd_{A_i} M_i \leq \pd_{\Lambda} M$, so $M_i$ has finite projective dimension over $A_i$, and thus $\pd_{A_i} M_i \leq \fd A_i$. Using Theorem~\ref{th:pd_formula}, 
$$\pd_{\Lambda} M\  \leq\  \max_{i \in \Gamma_0}\{\pd_{k\Gamma/I}S_i,\pd_{A_i}M_i\}\  \leq\  \max_{i \in \Gamma_0}\{\gd k\Gamma/I, \fd A_i\}. $$ 
Since $M$ is arbitrary, the statement follows.
\end{proof}

\section{Homological dimensions for gp-algebras}
\label{sec:some_consequences}

We shall now concentrate in gp-algebras which are, as observed above, terraced gbp-algebras. We start with the following result which is a direct consequence of the above considerations. 

\begin{theorem}
\label{cor:gd_formula}
Let $\Lambda = k(\Gamma,\Aa)$ be a  gp-algebra, with $\Gamma$ having at least one arrow. Then $\gd \Lambda = \max_{j \in \Gamma_0}\{ 1, \gd A_j\}$.

\end{theorem}
\begin{proof}
Observe that $\gd k\Gamma =1$ in this case and so, by Corollary \ref{cor:gdrelationdependent}, $\gd \Lambda \leq \max_{j \in \Gamma_0}\{ 1, \gd A_j\}$. The equality now follows using Corollary \ref{cor:easy_ineq_gd} and the fact that $\Lambda$ is not semisimple (since $k\Gamma$ is not).
\end{proof}

\subsection{Shod and quasitilted algebras} The next result is an application to the study of shod and quasitilted algebras. Quasitilted algebras were introduced in \cite{HRS} as a generalization of tilted algebras, by considering tilting objects in abelian categories. We shall, however, use a characterization of quasitilted algebras, also proven in \cite{HRS}, which suits better our purpose here. The shod algebras were then introduced in \cite{CLa1} in order to generalize the concept of quasitilted. The acronym shod stands for small homological dimension, as it is clear from the definition below. We refer to \cite{CLa1,HRS} for more details. 

\begin{definition}
Let $A$ be an algebra. We say that $A$ is a \textbf{shod} algebra if, for every indecomposable $A$-module $M$, either $\pd_A M \leq 1$ or $\id_A M \leq 1$. If, besides from being shod, $A$ has global dimension of at most two, we say that $A$ is \textbf{quasitilted}.
\end{definition}

Our next result allows us to produce a quasitilted or shod gp-algebra from other algebras. It is worth mentioning that it is not intended as a complete description of which generalized (bound) path algebras are quasitilted or shod. Before stating it, please note that every hereditary algebra is quasitilted, and thus also shod.

\begin{proposition}
Let $\Lambda = k(\Gamma, \Aa)$ be a gp-algebra, with $\Gamma$ acyclic. Suppose that $A_j$ is hereditary for every $j \in \Gamma_0$, except possibly for a single vertex $i \in \Gamma_0$. Then:

\begin{enumerate}
\item[(a)] If $A_i$ is shod, then $\Lambda$ is shod.
\item[(b)] If $A_i$ is quasitilted,  then $\Lambda$ is quasitilted.
\end{enumerate}
\end{proposition}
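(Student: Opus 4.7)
The strategy is to deploy the homological formulae of Section~2 to reduce everything to the component of $M$ at the distinguished vertex $i$. First, since $I=0$ and $\Gamma$ is acyclic, the ordinary path algebra $k\Gamma/I = k\Gamma$ is hereditary, so every simple module $S_j$ satisfies $\pd_{k\Gamma}S_j \leq 1$ and $\id_{k\Gamma}S_j \leq 1$; moreover the hypothesis in Remark~\ref{obs:quiver_conditions} is vacuous (no relations start at any vertex), so Theorem~\ref{th:pd_formula} and Corollary~\ref{cor:calculate_every_id} apply.

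For part (b), I would first bound the global dimension using Corollary~\ref{cor:gd_formula}:
\[
\gd \Lambda \leq \max\{\gd k\Gamma,\ \gd A_1,\dots,\gd A_n\} \leq \max\{1,2\} = 2,
\]
since $\gd A_j \leq 1$ for every $j\neq i$ (hereditary) and $\gd A_i \leq 2$ (quasitilted). Hence once the shod dichotomy for $\Lambda$ is established, part (b) follows immediately; the remaining content is the shod step, which is common to both parts.

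For the shod dichotomy, take an indecomposable $\Lambda$-module $M$. Applying Theorem~\ref{th:pd_formula} and Corollary~\ref{cor:calculate_every_id}, the hereditariness of $k\Gamma$ and of each $A_j$ with $j\neq i$ collapses those bounds to
\[
\pd_\Lambda M \leq \max\{1,\ \pd_{A_i} M_i\},\qquad \id_\Lambda M \leq \max\{1,\ \id_{A_i} M_i\},
\]
with the understanding that the $A_i$-term is $0$ when $i\notin\supp M$. In particular, if $i\notin\supp M$ we get $\pd_\Lambda M \leq 1$ for free, so everything reduces to showing, in the remaining case, that $\pd_{A_i}M_i\leq 1$ or $\id_{A_i}M_i\leq 1$.

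The main obstacle is precisely this last reduction, because shod-ness of $A_i$ is an indecomposable-module statement while $M_i$ need not be indecomposable over $A_i$ even when $M$ is indecomposable over $\Lambda$. My plan to resolve it is to decompose $M_i = \bigoplus_k N_k$ into $A_i$-indecomposables, separate them according to the shod classes $\mathcal{L}_{A_i}$ and $\mathcal{R}_{A_i}$, and use the acyclicity of $\Gamma$ together with the hereditariness of every surrounding $A_j$ to argue that a mixed decomposition of $M_i$ would propagate to a splitting of $M$ itself, contradicting indecomposability. Once all summands $N_k$ are forced onto a single side of the shod partition of $\operatorname{ind}A_i$, the dichotomy $\pd_{A_i}M_i\leq 1$ or $\id_{A_i}M_i\leq 1$ holds and, combined with the displayed bounds, delivers the shod conclusion for $\Lambda$.
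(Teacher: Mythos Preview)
Your overall strategy coincides with the paper's: after noting that $I=0$ makes Remark~\ref{obs:quiver_conditions} vacuous and $k\Gamma$ hereditary, you apply Theorem~\ref{th:pd_formula} and Corollary~\ref{cor:calculate_every_id} to obtain $\pd_\Lambda M\le\max\{1,\pd_{A_i}M_i\}$ and $\id_\Lambda M\le\max\{1,\id_{A_i}M_i\}$, and you handle~(b) via Corollary~\ref{cor:gd_formula}. At precisely this point the paper writes ``since $A_i$ is shod, either $\pd_{A_i}M_i\le1$ or $\id_{A_i}M_i\le1$'' and concludes; it does not discuss whether $M_i$ is indecomposable. So the obstacle you single out is one the paper's own proof does not work through---it is simply asserted there.

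The genuine gap, then, is in your proposed resolution of that obstacle. The maps $\phi_\alpha$ in a representation are only $k$-linear, so an $A_i$-module decomposition $M_i=M_i^{\mathcal L}\oplus M_i^{\mathcal R}$ need not be respected by the maps entering or leaving vertex~$i$; hereditariness of the neighbouring $A_j$ constrains their homological dimensions, not the linear algebra of the $\phi_\alpha$. For instance, with $\Gamma\colon 1\to 2$, $A_1=k$, $A_2$ strictly shod, and $M=(k,\,L\oplus R,\,1\mapsto(l,r))$ for nonzero $l\in L\in\mathcal L_{A_2}\setminus\mathcal R_{A_2}$ and $r\in R\in\mathcal R_{A_2}\setminus\mathcal L_{A_2}$, nothing in your scheme forces $M$ to split, yet Lemmas~\ref{lem:easy_ineq_pd} and~\ref{lem:easy_ineq_id} already give $\pd_\Lambda M\ge\pd_{A_2}R\ge2$ and $\id_\Lambda M\ge\id_{A_2}L\ge2$. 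Your ``mixed decomposition propagates to a splitting of $M$'' step therefore needs a real argument; as written it is the missing piece---and it is the same piece the paper leaves as a bare assertion.
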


\begin{proof}
(a) Let $M = ((M_j)_{j \in \Gamma_0},(\phi_{\alpha})_{\alpha \in \Gamma_1})$ be an indecomposable representation over $\Lambda$. Since $\Gamma$ is acyclic, we infer that the  algebra $k\Gamma$ is hereditary and so every simple module over it will have projective and injective dimension of at most one. 
Observe also that, since  $A_j$ is hereditary for $j \neq i$, we also have $\pd_{A_j} M_j \leq 1 $ and $\id_{A_j} M_j \leq 1$ if $j \neq i$. \\
Now, since $A_i$ is shod, either $\pd_{A_i} M_i \leq 1$ or $\id_{A_i} M_i \leq 1$. In the former case, from Theorem~\ref{th:pd_formula}, we have that $\pd_{\Lambda} M \leq \max_{j \in \Gamma_0} \{\pd_{A_j} M_j, \pd_{k\Gamma}S_j\} \leq 1$, and in the latter, using Corollary~\ref{cor:calculate_every_id} in an analogous manner, one obtains that $\id_{\Lambda} M \leq 1$. Thus $\Lambda$ is shod. \\
(b) Since $A_i$ is quasitilted, it is shod and from the previous item we get that $\Lambda$ is shod. It remains to prove that $\gd \Lambda \leq 2$. Applying Corollary~\ref{cor:gdrelationdependent},
$$\gd \Lambda \leq \max_{j \in \Gamma_0} \{k\Gamma, \gd A_j\} \leq 2,$$ 
using that $A_i$ is quasitilted and that the other algebras are hereditary.
\end{proof}

\begin{example}
This example will show that the converse of proposition above could not hold. Let $A$ be the bound path algebra over the quiver
\begin{displaymath}
\xymatrix{
1 \ar[r]^{\alpha} & 2 \ar[r]^{\beta} & 3
}
\end{displaymath}
bound by $\alpha \beta = 0$, and let $\Lambda$ be the generalized path algebra given by
\begin{displaymath}
\xymatrix{
A \ar[rr] && k && A \ar[ll] 
}
\end{displaymath}
We have that, with this setting, $\Lambda$ does not satisfy the hypothesis from the last proposition: there is more than one vertex upon which the algebra is quasitilted and non-hereditary.
However, using \cite{ICNLP}, Theorem 3.3 or \cite{CC1}, Theorem 3.9, we see that $\Lambda$ is isomorphic to the bound path algebra over the quiver

\begin{displaymath}
\xymatrix{
1 \ar[d]_{\alpha} \ar[drr] &&   && 5 \ar[d]^{\gamma} \ar[dll] \\
2 \ar[d]_{\beta} \ar[rr]  && 4  && 6 \ar[d]^{\delta} \ar[ll] \\
3 \ar[urr]        &&   && 7\ar[ull]
}
\end{displaymath}
bound by $\alpha \beta = \gamma \delta = 0$. Then it is easy to see that $\Lambda$ is a quasitilted algebra. The same example shows that the converse of the above proposition also does not hold for shod algebras.
\end{example}

We finish our considerations with a result which is a direct consequence of Proposition \ref{prop:findim}. 

\begin{proposition}
Let $\Lambda = k(\Gamma,\Aa)$ be a gp-algebra, with $\Gamma$ having at least one arrow. Then 
$$\fd \Lambda = \max_{i \in \Gamma_0} \left\{ 1, \fd A_i \right\}$$
In particular, if  $\fd A_i< \infty$ for each $i$,  then also $\fd \Lambda< \infty$.
\end{proposition}

\begin{proof}
Just observe that $\gd k\Gamma = 1$, and use Proposition \ref{prop:findim}. 
\end{proof}

\subsection*{Acknowledgements}
 The authors gratefully acknowledge financial support by São Paulo Research Foundation (FAPESP), grants \#2018/18123-5, \#2020/13925-6 and  \#2022/02403-4. The second author has also a grant by CNPq (Pq 312590/2020-2).

\normalsize

\end{document}